\def\N{\mathbb{N}}
\def\z{\mathbb{Z}}
\def\Z{\mathbb{Z}}
\def\c{\mathbb{C}}
\def\C{\mathbb{C}}
\def\dim{\hbox{dim}}
\def\ll{\lambda}
\newfont{\df}{eufm10}
\def\ll{\lambda}
\def\ot{\otimes}
\def\de{\delta}
\def\dim{\hbox{\rm dim}\,}
\def\Vir{\hbox{\rm Vir}}
\def\ot{\otimes}
\newtheorem{theo}{Theorem}[section]
\newtheorem{defi}[theo]{Definition}
\newtheorem{lemm}[theo]{Lemma}
\newtheorem{prop}[theo]{Proposition}
\numberwithin{equation}{section}
\begin{document}

\title[The Affine-Virasoro Algebra]{\bf Representations of the Affine-Virasoro \\ Algebra of type $A_1$}

\author[Y. Gao]{Yun Gao}
\address{Department of Mathematics and Statistics, York University, Toronto,
Canada M3J 1P3}
\email{ygao@yorku.ca}

\author[N. Hu]{Naihong Hu}
\address{Department of Mathematics, Shanghai Key Laboratory of Pure Mathematics and Mathematical Practice, East China Normal University, Shanghai, 200241, China}
\email{nhhu@math.ecnu.edu.cn}

\author[D. Liu]{Dong Liu$^\star$}
\address{Department of Mathematics, Huzhou University, Zhejiang Huzhou, 313000, China}
\email{liudong@zjhu.edu.cn}
\thanks{$^\star$ D. Liu, Corresponding author}

\subjclass{Primary  17B68; Secondary  17B67}

\keywords{Virasoro algebra, affine-Virasoro algebras, weight module.}
\date{}

\maketitle
\begin{abstract}
In this paper, we classify all irreducible weight
modules with finite-dimensional
weight spaces over the affine-Virasoro Lie algebra of type $A_1$. \vskip3mm
\end{abstract}

\section{Introduction}

It is well known that the affine Lie algebras and the Viasoro algebra have been widely used in many physics areas and mathematical branches, and the Virasoro algebra served as an outer-derivative
subalgebra plays a key role in representation theory of the affine Lie algebras. Their close relationship strongly suggests
that they should be considered simultaneously, i.e., as one algebraic
structure. Actually it has led to the definition of the so-called
affine-Virasoro algebra \cite{Ka1,Ku}, which is the semidirect
product of the Virasoro algebra and an affine Kac-Moody Lie algebra
with a common center. Affine-Virasoro algebras sometimes are much more connected to the conformal field theory.
For example, the even part of the $N=3$ superconformal algebra is
just the affine-Virasoro algebra of type $A_1$. Highest weight
representations and integrable representations of the affine-Virasoro
algebras have been studied in several papers (see
\cite{Ka1,EJ,Ku,JY,LH,LQ,W,XH}, etc.). All irreducible Harish-Chandra modules (weight
modules with finite-dimensional weight spaces) with nonzero central actions the affine-Virasoro
algebras were classified in \cite{B}. However, up to now, all irreducible uniform bounded modules over these algebras are not yet classified.

In this paper, we classify
all irreducible weight modules with finite-dimensional weight spaces over the affine-Virasoro Lie algebra of
type $A_1$. Throughout this
paper, $\z$, $\z^*$ and $\c$ denote the sets of integers, non-zero
integers and complex numbers, respectively. $U(L)$ denote the universal enveloping algebra of a Lie algebra $L$. All modules considered in this paper are nontrivial.   For any $\z$-graded space $G$, we also use notations $G_+, G_-, G_0$ and $G_{[p, q)}$ to denote the subspaces spanned by elements in $G$ of degree $k$ with $k>0$, $k<0$, $k=0$   and $p\le k<q$, respectively.

\section{Basics}
In this section, we shall introduce some notations of the Virasoro algebra and affine-Virasoro algebras.

\subsection{Virasoro algebra and twisted Heisenberg-Virasoro algebra}

By definition, the Virasoro algebra Vir:=$\c\{d_m, C\mid m\in\z\}$ with bracket:
\begin{equation}
[d_m, d_n]=(n-m)d_{m+n}+\de_{m+n,0}\frac{m^3-m}{12}C,\quad  [d_m, C]=0,
\end{equation} for all $m, n\in\z$.

Let $\c[t, t^{-1}]$
be the Laurent polynomials ring over $\c$, then $\mbox{Der}\,\c[t, t^{-1}]=\c\{t^{m+1}\frac d{dt}\mid m\in\z\}$
(also denote by Vect($S^1$), the Lie algebra of all vector fields on the circle).
$$\Vir=\widehat{\mbox{Der}\,\c[t, t^{-1}]}.$$

The twisted Heisenberg-Virasoro algebra $\mathcal H$ was first studied by
Arbarello et al in \cite{ADKP}, where a connection is established
between the second cohomology of certain moduli spaces of curves and
the second cohomology of the Lie algebra of differential operators
of order at most one.
By definition, $\mathcal H$ is the universal central extension of
the following Lie algebra $\mathcal D$, which is the Lie algebra of
differential operators order at most one.

\begin{defi} As a vector space over $\c$, the Lie algebra $\mathcal D$ has a
basis $\{d_n, Y_n\mid n \in\z\}$ with the following relations
\begin{eqnarray}
&&[d_m,d_n]=(n-m)d_{m+n},\\
&&[d_m,Y_n]=nY_{m+n}, \\
&&[Y_m,Y_n]=0,
\end{eqnarray} for all $m,\; n\in\z$.
\end{defi}

Clearly, the subalgebra $H=\c\{Y_m\mid m\in\z\}$ of $\mathcal D$ is centerless Heisenberg algebra and
$W=\c\{d_m\mid m\in\z\}$ is the Witt algebra (or centerless Virasoro algebra).

\subsection{Affine-Virasoro algebra}

\begin{defi} \label{aff-vir} Let $L$ be a finite-dimensional Lie algebra with a
non-degenerated invariant normalized symmetric bilinear form $(\, , )$, then the affine-Virasoro Lie algebra is the vector space
$$L_{av}=L\otimes \c[t, t^{-1}]\oplus\c C\oplus\bigoplus_{i\in\z}\c{d_i},$$
with Lie bracket:
\begin{eqnarray*}&&[\,x\otimes t^m, y\otimes t^n\,]=[\,x, y\,]\otimes t^{m+n}+m(x, y)\de_{m+n, 0}C,\\
&&[d_i, d_j]=(j-i)d_{i+j}+{1\over 12}(j^3-j)\de_{i+j, 0}C,\\
&&[d_i, x\ot t^m]=mx\ot t^{m+i}, \quad [C, L_{av}]=0,\end{eqnarray*}
where $x, \,y\in L$, $m,n, i, j\in\z$ (if $L$ has no such form, we set $(x, y)=0$ for all $x, y\in L$).
\end{defi}
\noindent{\bf Remark:}
If $L=\c{e}$ is one dimensional, then $L_{av}$ is just the
twisted Heisenberg-Virasoro algebra (one center element).

\vskip5pt

Now we only consider specially $L$ as the simple Lie algebra $\frak{sl}_2=\c\{e, f, h\}$. Then by Defintion \ref{aff-vir}, the corresponding affine-Virasoro algebra $\mathcal L:=L_{av}=\c\{e_i, f_i, h_i,d_i, C\mid i\in\z\}$,
with Lie bracket:
\begin{eqnarray*}&&[\,e_i, f_j\,]=h_{i+j}+i\de_{i+j, 0}C,\\
&&[\,h_i, e_j\,]=2e_{i+j}, \quad  [\,h_i, f_j\,]=-2f_{i+j},\\
&&[d_i, d_j]=(j-i)d_{i+j}+{1\over 12}(j^3-j)\de_{i+j, 0}C,\\
&&[d_i, h_j]=jh_{i+j}, \quad [h_i, h_j]=2i\de_{i+j, 0}C,\\
&&[d_i, e_j]=je_{i+j}, \quad [d_i, f_j]=jf_{i+j}, \quad [C, {\mathcal L}]=0,\end{eqnarray*}
where $i, j\in\z$.

\noindent{\bf Remark.} In fact, $\mathcal L$
is the even part of the $N=3$ superconformal algebra (\cite{CL}).

\vskip5pt
The Lie algebra $\mathcal L=\oplus_{i\in\z}{\mathcal L}_i$
is a $\z$-graded Lie algebra, where ${\mathcal L}_i=\c\{d_i, e_i, f_i, h_i, \delta_{0, i}C\}$.
The subalgebra ${\mathcal H}_X:=\c\{X_i, d_i, C\mid i\in\z\}$ for $X=e,f,h$ of $\mathcal L$
is isomorphic to the twisted
Heisenberg-Virasoro algebra ${\mathcal H}$ (the only difference is the center element).
Clearly $\frak h:=\c h_0+\c C+\c
d_0$ the Cartan subalgebra of ${\mathcal L}$.

\subsection{Harish-Chandra modules}

For any
$\mathcal L$-module $V$ and $\lambda, \mu\in \c$, set
$V_{\lambda, \mu}:=\bigl\{v\in V\bigm|d_0v=\lambda v, h_0v=\mu v\bigr\}$, which is
generally called the weight space of $V$ corresponding to the weight
$\lambda, \mu$.

An $\mathcal L$-module $V$ is called a weight module if $V$ is the
sum of all its weight spaces.

 A nontrivial irreducible weight ${\mathcal L}$-module $V$ is called of intermediate
 series if all its weight spaces are one-dimensional.
\par
A  weight ${\mathcal L}$-module $V$ is called a {highest} (resp.
{lowest) weight module} with {highest weight} (resp. {lowest
weight}) $\lambda, \mu\in \c$, if there exists a nonzero weight vector $v
\in V_{\lambda, \mu}$ such that

1) $V$ is generated by $v$ as  ${\mathcal L}$-module;

2) ${\mathcal L}^+ v=0 $ (resp. ${\mathcal L}^- v=0 $), where ${\mathcal L}^+={\mathcal L}_++\c e_0$ and ${\mathcal L}^-={\mathcal L}_-+\c f_0$ (the notations ${\mathcal L}_+=\sum_{i\ge 1}{\mathcal L}_i$, ${\mathcal L}_-=\sum_{i\le -1}{\mathcal L}_i$ are introduced in Section 1).

If, in addition, all weight spaces $V_{\ll, \mu}$ of a weight ${\mathcal L}$-module $V$ are finite-dimensional, the module $V$ is called a {\it
Harish-Chandra module}. Clearly, a highest (lowest) weight module
is a Harish-Chandra module.

For a weight module $V$, we define
\begin{equation}\hbox{Supp}(V):=\bigl\{\lambda\in \c \bigm|V_\lambda=\oplus_{\mu\in\c}V_{\lambda, \mu} \neq
0\bigr\}.\end{equation}

Obviously, if $V$ is an irreducible weight ${\mathcal L}$-module, then
there exists $\lambda\in\c$ such that ${\rm
Supp}(V)\subset\lambda+\z$. So $V=\sum_{i\in \z}V_i$ is a $\z$-graded module, where $V_i=V_{\ll+i}$.

Kaplansky-Santharoubane \cite{KS} in 1983 gave a classification of
Vir-modules of the intermediate series. There are three families
of indecomposable modules with each weight space is one-dimensional:

(1) ${\mathcal A}_{a,\; b}=\sum_{i\in\z}\c v_i$:
$d_mv_i=(a+i+b m)v_{m+i}$;

(2) ${\mathcal A}(a)=\sum_{i\in\z}\c v_i$: $d_mv_i=(i+m)v_{m+i}$
if $i\ne 0$, $d_mv_0=m(m+a)v_{m}$;

(3) ${\mathcal B}(a)=\sum_{i\in\z}\c v_i$:  $d_mv_i=iv_{m+i}$ if
$i\ne -m$, $d_mv_{-m}=-m(m+a)v_0$,  for some $a, b\in\c$, where $C$ acts trivially on the above modules.

It is well-known that ${\mathcal A}_{a,\; b}\cong{\mathcal A}_{a+1,\; b}, \forall a, b\in\c$, then we can always suppose that $a\not\in\z$ or $a=0$ in ${\mathcal A}_{a,\; b}$.
Moreover, the module
${\mathcal A}_{a,\; b}$ is simple if $a\notin\z$ or $b\ne0, 1$.
In the opposite case the module
contains two simple subquotients namely the trivial module and
$\c[t, t^{-1}]/\c$. It is also clear that ${\mathcal A}_{0,0}$ and
${\mathcal B}(a)$ both have $\c v_0$ as a submodule, and their corresponding
quotients are isomorphic, which we denote by ${\mathcal A}_{0,0}'$. Dually,
${\mathcal A}_{0,1}$ and ${\mathcal A}(a)$ both have $\C v_0$ as a quotient module, and
their corresponding submodules are isomorphic to ${\mathcal A}_{0,0}'$. For
convenience, we simply write ${\mathcal A}_{a,b}'={\mathcal A}_{a,b}$ when ${\mathcal A}_{a,b}$ is
irreducible.

All Harish-Chandra modules over the Virasoro algebra were classified in \cite{M} in 1992.
Since then such works were done on the high rank Virasoro algebra in \cite{LvZ0} and \cite{S1}, the Weyl algebra in \cite{S}.

\begin{theo}\cite{M}
Let $V$ be an irreducible weight Vir-module with finite-dimensional weight spaces.
Then $V$ is a highest weight module, lowest weight module, or Harish-Chandra module of intermediate series.
\end{theo}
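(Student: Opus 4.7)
The plan is to split according to whether the weight multiplicities of $V$ are uniformly bounded. By irreducibility we may fix $\ll\in\c$ with $\hbox{Supp}(V)\subseteq\ll+\z$; moreover $C$ acts as a scalar $c\in\c$. Write $V=\bigoplus_{i\in\z}V_i$ with $V_i=V_{\ll+i}$, and note that each $d_m$ shifts $V_i$ into $V_{i+m}$.

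\textbf{Case 1 (uniformly bounded).} Suppose $\sup_i\dim V_i<\infty$. The goal is to prove $\dim V_i\le 1$ for each nonzero $V_i$, which reduces the classification to the Kaplansky--Santharoubane list. The first step is to show $c=0$: on the finite-dimensional space $V_i$, the commutator $[d_m,d_{-m}]=-2m\,d_0+\frac{m^3-m}{12}C$ acts as the scalar $-2m(\ll+i)+\frac{m^3-m}{12}c$, and comparing this with the trace identities arising from the two orderings of $d_m$ and $d_{-m}$ between $V_i$ and $V_{i\pm m}$ eliminates the cubic term in $m$ and forces $c=0$. With $c=0$ one is reduced to an irreducible uniformly bounded Witt-algebra module, and a polynomial/coherent-family analysis then shows the action factors through some ${\ml A}_{a,b}$, yielding weight spaces of dimension at most one.

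\textbf{Case 2 (not uniformly bounded).} If $\dim V_i$ is unbounded, the aim is to exhibit a highest or lowest weight vector. The central dichotomy is: either there exist $N\ge 1$ and a nonzero $v\in V_i$ with $d_nv=0$ for all $n\ge N$ (or symmetrically for $n\le -N$), or every weight vector is moved faithfully by arbitrarily large $d_n$. Using $[d_m,d_n]=(n-m)d_{m+n}+\cdots$, the second alternative forces the dimensions $\dim V_{i+k}$ to grow in a tightly controlled polynomial manner simultaneously in both directions, which is incompatible with the combination of unboundedness and finiteness of each individual weight space. Extracting such a $v$, one then uses the generators $d_k$ for $0<k<N$ to push $v$ down to a vector annihilated by all $d_n$ with $n\ge 1$; irreducibility then identifies $V$ as a highest weight module (or a lowest weight module, symmetrically).

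\textbf{Main obstacle.} The core technical difficulty lies in Case 2: converting unboundedness of weight multiplicities into the existence of a singular vector. This requires quantitative control of the growth of $\dim V_i$ governed by the non-abelian structure of the Virasoro bracket, and is the heart of Mathieu's argument. Once this singular-vector extraction is achieved, the remaining passage to a highest/lowest weight vector and the reduction in the bounded case both follow from relatively standard manipulations with the Virasoro commutation relations.
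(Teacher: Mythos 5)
The paper does not prove this statement: it is quoted from Mathieu \cite{M} as a black box, so there is no internal proof to compare yours against. Judged on its own terms, your outline has two genuine gaps. In Case 2, the mechanism you give for ruling out the second alternative (``dimensions grow in a tightly controlled polynomial manner \dots\ incompatible with unboundedness and finiteness of each individual weight space'') is not an argument and has the wrong shape: polynomial growth is perfectly compatible with finite-dimensional weight spaces, and unboundedness is exactly what you are assuming in this case. The correct and standard mechanism (it is the one the paper itself uses in the proof of Theorem 4.1 for $\mathcal L$) is that if no nonzero $v\in V_k$ is killed simultaneously by $d_i$ and $d_{i+1}$ for some $i\in\z^*$, then $d_{-k}|_{V_k}\oplus d_{-k+1}|_{V_k}\colon V_k\to V_0\oplus V_1$ is injective for every $k\ne 0$, so $\dim V_k\le \dim V_0+\dim V_1$ and $V$ is uniformly bounded --- contradicting the Case 2 hypothesis outright, with no growth estimates at all. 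Conversely, once such a $v$ exists one does not ``push $v$ down'' with the $d_k$, $0<k<N$, to a vector killed by all $d_n$, $n\ge 1$; rather one uses the numerical semigroup generated by $i$ and $i+1$ to get $\Vir_{[N,\infty)}v=0$, then a PBW and finite-generation argument to show that the support of $V$ is bounded above, and only then extracts a highest weight vector from the top graded piece.

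More seriously, in Case 1 the sentence ``a polynomial/coherent-family analysis then shows the action factors through some ${\mathcal A}_{a,b}$'' is precisely Kac's conjecture that a uniformly bounded irreducible weight module has all weight multiplicities at most one; this is the heart of Mathieu's theorem, proved by a delicate degeneration/polynomial-module argument, and it cannot be dismissed as ``relatively standard manipulations with the Virasoro commutation relations.'' Asserting it is assuming the hardest part of the result. The trace computation you propose for $c=0$ is also incomplete as stated: $\tr\bigl([d_m,d_{-m}]|_{V_i}\bigr)$ is a difference of traces of composites passing through two \emph{different} weight spaces, so one only obtains telescoping relations among quantities attached to distinct $V_j$, and extracting $c=0$ requires in addition that $\dim V_j$ is eventually constant. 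In short, the dichotomy you set up is the right one, but both halves either misstate or defer the actual work.
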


\begin{theo}\cite{LvZ}\label{thv}
If $V$ is an irreducible weight module with finite-dimensional weight spaces over $\mathcal D$, then $V$ is a highest or lowest weight
module or the Harish-Chandra module of uniformly bounded.
Moreover, any uniformly bounded module is a Harish-Chandra module of intermediate series.
\end{theo}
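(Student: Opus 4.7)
The plan is to reduce to Mathieu's classification of Virasoro modules (Theorem 2.3) by restricting $V$ to the Witt subalgebra $W=\bigoplus_{n\in\z}\c d_n$, and to exploit the abelian ideal $H=\bigoplus_{n\in\z}\c Y_n$ together with the centrality of $Y_0$ in $\mathcal D$ (which follows from $[d_m,Y_0]=0$ and $[Y_m,Y_n]=0$). By Dixmier's version of Schur's lemma for countable-dimensional irreducibles, $Y_0$ acts on $V$ as some scalar $c\in\c$.

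\textit{Unbounded case.} The first goal is to show that $\mathrm{Supp}(V)$ is bounded above or bounded below; by the involution of $\mathcal D$ given by $d_n\mapsto-d_{-n}$, $Y_n\mapsto-Y_{-n}$ (which is easily checked to preserve all defining brackets), it suffices to handle the former. Viewed as a weight $W$-module with finite-dimensional weight spaces, $V$ is generally not simple, but if its weight-space dimensions are unbounded then some simple $W$-subquotient must fail to be of intermediate series, hence by Theorem 2.3 is of highest- or lowest-weight type. A filtration argument, tracking which composition factors can appear and using that the $H$-action only finitely shifts weights, then forces $\mathrm{Supp}(V)$ itself to be one-sided bounded, say above by some $\ll_0$. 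Any nonzero $v\in V_{\ll_0}$ is then automatically killed by $d_n$ and $Y_n$ for every $n\ge 1$, since the target spaces $V_{\ll_0+n}$ vanish; so $v$ is a $\mathcal D$-highest-weight vector and irreducibility realizes $V$ as the corresponding highest-weight module.

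\textit{Uniformly bounded case.} Here the task is to show that every weight space is one-dimensional. I would first rule out $c\neq 0$: the relation $[d_{-n},Y_n]=nc\cdot\mathrm{id}_V$, combined with the identity $\mathrm{tr}_{V_\ll}(d_{-n}Y_n)=\mathrm{tr}_{V_{\ll+n}}(Y_n d_{-n})$, telescopes into an equation whose right-hand side grows linearly in the range of weights while the uniform bound $\sup_\mu\dim V_\mu<\infty$ forces the left-hand side to remain controlled, producing a contradiction. With $c=0$, the identity $(n-m)Y_n=[d_m,Y_{n-m}]$ and the commutativity of $H$ propagate, via direct action on weight vectors, to the conclusion that $Y_n|_V=0$ for every $n\in\z$. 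Then $V$ descends to a simple uniformly bounded $W$-module, and Mathieu's theorem immediately yields that $V$ is of intermediate series.

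The principal obstacle is the uniformly bounded case: propagating the commutator relations through bounded weight spaces so as to rule out $c\neq 0$ and then to force triviality of each $Y_n$ requires delicate matrix-coefficient estimates, since one cannot rely on the infinite-dimensional room exploited in the unbounded case. The unbounded case is comparatively routine, the only subtle point being the filtration argument that one-sided-bounds $\mathrm{Supp}(V)$ from the mere existence of a highest- or lowest-weight $W$-subquotient.
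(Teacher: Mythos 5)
This theorem is not proved in the paper at all: it is quoted verbatim from the reference [LvZ] (Lu--Zhao), so there is no in-paper argument to compare yours against; I can only assess your proposal on its own merits. The unbounded half of your sketch follows the standard template (reduce to Mathieu via the Witt subalgebra, bound the support on one side, extract a highest-weight vector), though the ``filtration argument'' that upgrades the existence of a highest- or lowest-weight $W$-subquotient to one-sided boundedness of $\mathrm{Supp}(V)$ is exactly where the real work lies, and you have not supplied it.

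The uniformly bounded half, however, contains a fatal error. You propose to rule out $c\neq 0$, where $c$ is the scalar by which the central element $Y_0$ acts. This is false: the intermediate-series modules ${\mathcal A}_{a,b,c}$ described in Remark (1) right after this theorem in the paper, with $d_m v_i=(a+i+bm)v_{m+i}$ and $Y_n v_i=c\,v_{n+i}$, are irreducible uniformly bounded weight $\mathcal D$-modules on which $Y_0$ acts by the \emph{nonzero} scalar $c$ (indeed $c\neq 0$ is one of the sufficient conditions for simplicity). Your trace argument does not detect a contradiction because $\mathrm{tr}_{V_\mu}(Y_n d_{-n})$ is not bounded in $\mu$: on ${\mathcal A}_{a,b,c}$ one computes $\mathrm{tr}_{V_{a+i}}(Y_n d_{-n})=c(a+i-bn)$, which grows linearly and exactly matches the telescoped right-hand side $nc\sum\dim V_{\mu+jn}$. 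Since $c\neq 0$ cannot be excluded, your reduction to a $W$-module with $Y_n|_V=0$ collapses, and the genuinely hard content of the ``Moreover'' clause --- that a uniformly bounded irreducible module has one-dimensional weight spaces even when the abelian part $H$ acts nontrivially --- is left with no argument at all. A secondary gap: even when $c=0$, the implication $Y_0=0\Rightarrow Y_n=0$ does not follow from $(n-m)Y_n=[d_m,Y_{n-m}]$ by setting $n=m$, since that relation degenerates to $0=0$; forcing the $Y_n$ to act as the uniform ``shift by $c$'' operators is a substantial step of the Lu--Zhao proof, not a formal consequence of the brackets.
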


\noindent{\bf Remarks.}
(1) The Harish-Chandra module of the intermediate series over $\mathcal D$ is induced by ${\mathcal A}_{a,\; b}=\sum_{i\in\z}\c v_i$ with $Y_nv_i=cv_{n+i}$ for some $c\in\c$.
Denote this module by ${\mathcal A}_{a,\; b,\; c}$.
It is well-known that ${\mathcal A}_{a,\; b,\; c}\cong{\mathcal A}_{a+1,\; b,\; c}, \forall a, b, c\in\c$.
Moreover, the module ${\mathcal A}_{a,\; b,\;c}$ is simple if $a\notin\z$ or $b\ne0, 1$ or $c\ne 0$.
We also use ${\mathcal A}_{a,\; b,\; c}'$ to denote by the simple subquotient of ${\mathcal A}_{a,\; b,\; c}$ as in the Virasoro algebra case.

(2) All indecomposable Harish-Chandra modules of
the intermediate series over $\mathcal D$ were classified in \cite{LJ}.

\begin{lemm} \label{lem25}
Let $V$ be a uniformly bounded weight ${\mathcal D}$-module. Then $V$ has an irreducible submodules $V'\cong
{\mathcal A}_{a, b, c}'$ for some $a, b, c\in\c$.
\end{lemm}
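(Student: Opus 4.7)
\emph{Plan.} My plan is to reduce $V$ to a normalized form, find an irreducible submodule by a minimality argument at a fixed weight, and then apply Theorem~\ref{thv}.

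First, the defining relations give $[d_m,Y_0]=0$ and $[Y_m,Y_0]=0$ for all $m\in\mathbb{Z}$, so $Y_0$ lies in the center of $\mathcal{D}$. Since the weight spaces of $V$ are finite-dimensional, $Y_0$ decomposes $V$ into a direct sum of generalized eigenspaces, each of which is a $\mathcal{D}$-submodule; passing to one such summand and then to the nonzero submodule $\ker(Y_0-c)$ for an eigenvalue $c$, I may assume $Y_0=c\cdot\mathrm{Id}$ on $V$. The $\mathbb{Z}$-grading of $\mathcal{D}$ similarly lets me split $V=\bigoplus_{[\lambda]}V^{[\lambda]}$ with $V^{[\lambda]}=\bigoplus_{i\in\mathbb{Z}}V_{\lambda+i}$, and I pass to one such summand to assume $\mathrm{Supp}(V)\subseteq \lambda+\mathbb{Z}$.

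Next, fix $\mu_0\in\mathrm{Supp}(V)$ and let $A\subseteq\mathrm{End}(V_{\mu_0})$ be the associative subalgebra obtained as the image of the weight-zero part $U(\mathcal{D})_0$ acting on $V_{\mu_0}$. Since $V_{\mu_0}$ is a nonzero finite-dimensional $A$-module, it contains a simple $A$-submodule $M_0$. Set $V'=U(\mathcal{D})\cdot M_0\subseteq V$. A weight-degree argument identifies $V'\cap V_{\mu_0}=A\cdot M_0=M_0$. Any nonzero $\mathcal{D}$-submodule of $V'$ meeting $V_{\mu_0}$ must do so in a nonzero $A$-submodule of $M_0$, hence in all of $M_0$, and then equals $V'$ by cyclicity; so no proper nonzero submodule of $V'$ meets $V_{\mu_0}$. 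To preclude proper nonzero submodules whose support avoids $\mu_0$, I use a Zorn-style argument: the finite-dimensionality of each $V_\mu$ forces the chains $\{W_\alpha\cap V_\mu\}$ to stabilize, and uniform boundedness of $V$ — together with the fact that a nontrivial submodule of $V$ with support bounded below or above would carry a highest- or lowest-weight-type structure, and by a PBW count on $U(\mathcal{D}_\pm)$ cannot be uniformly bounded — forces the intersection to remain nonzero. Thus $V'$ is irreducible.

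Finally, $V'$ is uniformly bounded as a submodule of $V$. By Theorem~\ref{thv}, $V'$ is either a highest- or lowest-weight $\mathcal{D}$-module, or a Harish-Chandra module of intermediate series. The first two are ruled out by the same PBW count (their weight-space dimensions grow like the bipartition function), so $V'\cong \mathcal{A}_{a,b,c}'$ for some $a,b,c\in\mathbb{C}$.

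The main obstacle is the last sentence of the second paragraph, namely ruling out proper nonzero submodules of $V'$ whose support avoids $\mu_0$. This is where uniform boundedness is essential and is combined in a nontrivial way with the commutation $[d_m,Y_n]=nY_{m+n}$ and the commutativity of the Heisenberg subalgebra, to propagate the minimality of $M_0$ as an $A$-module across the support of any hypothetical hidden submodule and derive a contradiction.
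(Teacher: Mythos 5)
Your construction $V'=U(\mathcal{D})\cdot M_0$ does not, in general, produce an irreducible submodule, and the step you yourself flag as "the main obstacle" --- ruling out nonzero proper submodules of $V'$ whose support avoids $\mu_0$ --- is not merely unproven but is false as a general strategy. A concrete illustration already at the level of $\Vir$: take $V={\mathcal A}_{0,0}$ and $\mu_0$ a nonzero weight, so $V_{\mu_0}=\c v_1$ and $M_0=\c v_1$; then $U(\Vir)v_1=V$ (since $d_{-1}v_1=v_0$ and $d_mv_j=jv_{m+j}$), and $V$ contains the proper submodule $\c v_0$, which meets $V_{\mu_0}$ trivially. So cyclic generation from a simple module for the degree-zero algebra $A$ only shows that every nonzero submodule meeting $V_{\mu_0}$ is everything; it cannot exclude "hidden" submodules concentrated at other weights, and your appeal to "a Zorn-style argument" plus "support bounded below or above" does not apply, because such a hidden submodule need not have one-sided support (in the example its support is a single point in the interior of $\mathrm{Supp}(V)$). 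The preliminary reductions (centrality of $Y_0$, restriction to one coset $\lambda+\z$) are correct but do not help here.

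The paper closes this gap by a finite-length argument rather than a cyclicity argument: restricting $V$ to $\Vir$, the Kaplansky--Santharoubane theory gives $\dim V_{\lambda+n}=p$ off weight zero and a finite $\Vir$-filtration $0=W^{(0)}\subset\cdots\subset W^{(p)}=V$ with intermediate-series subquotients; since every chain of $\mathcal{D}$-submodules is in particular a chain of $\Vir$-submodules, its length is bounded, so the descending chain condition holds and a \emph{minimal} nonzero $\mathcal{D}$-submodule $V'$ exists. That $V'$ is automatically irreducible and uniformly bounded, and Theorem~\ref{thv} then identifies it as ${\mathcal A}_{a,b,c}'$ (your final step, discarding highest and lowest weight modules, agrees with this part). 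To repair your proof you should replace the second paragraph entirely by such a finite-length/DCC argument; as written, the central claim of irreducibility of $V'$ does not hold.
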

\begin{proof}
Consider $V$ as a $\Vir$-module. From
representation theory of $\Vir$ (\cite{KS}), we have $\dim
V_{\ll+n}=p$ for all $\ll+n \neq 0$. We have a $\Vir$-submodule
filtration
    $$0=W^{(0)}\subset W^{(1)} \subset W^{(2)}\subset \cdots \subset W^{(p)}=V,$$
where $W^{(1)}, \cdots ,W^{(p)}$ are $\Vir$-submodules of $V$, and
the quotient modules \break $W^{(i)}/W^{(i-1)}\cong {\mathcal A}_{a_i, b_i}'$ for some $a_i, b_i\in\c$.

Now any ${\mathcal D}$-submodule filtration is also a $\Vir$-submodule filtration and then its length is finite. So $V$ has an irreducible
${\mathcal D}$-submodule $V'$, which is also a uniformly bounded. So by Theorem 2.3, $V'\cong
{\mathcal A}_{a, b, c}'$ for some $a, b, c\in\c$.
\end{proof}

\section{The case of $\dim L=2$}

 Let $T$ be a $2$-dimensional nontrivial Lie algebra. Then we can suppose that $T=\c\{h, e\}$ with $[h, e]=2e$.
 Clearly, there exists an invariant symmetric bilinear form $(\, , )$  on $T$ given by
$$(h, h)=2, \ \ (h, e)=(e, e)=0.$$

In this case, the Lie algebra ${\mathcal T}_2:=T_{av}$ is generated by $\{d_n, e_n, h_n, C\mid n\in\z\}$
\begin{eqnarray*}
&&[d_m,d_n]=(n-m)d_{m+n}+{1\over 12}(n^3-n)\de_{m+n, 0}C,\quad [d_m,e_n]=ne_{m+n},\\
&&[d_m,h_n]=nh_{m+n},\quad
[h_m,e_n]=2e_{m+n},\\
&&[e_m,e_n]=0,\  [h_m,h_n]=2m\de_{m+n, 0}C,
\end{eqnarray*} for all $m,n\in\z$.

Clearly, ${\mathcal T}_2$ is a subalgebra of $\mathcal L$.

\begin{prop} \label{p31}
Let $V$ be a uniformly bounded irreducible
${\mathcal T}_2$-module. Then $V=\sum\c v_i\cong {\mathcal A}_{a,b,c}$ is the Harish-Chandra module of intermediate series with $h_nv_i=cv_{n+i}$ and $e_nv_i=0$.
\end{prop}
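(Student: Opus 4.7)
The plan is to first establish that $e_n$ acts as zero on $V$, and then to invoke the classification of irreducible weight modules over the twisted Heisenberg-Virasoro subalgebra $\mathcal{H}_h=\mathbb{C}\{d_i,h_i,C\mid i\in\mathbb{Z}\}$ to identify $V$ explicitly. The starting observation is that $V$ carries a second grading by the $h_0$-eigenvalue: since $[d_m,h_0]=0$, $[h_m,h_0]=0$ and $[C,h_0]=0$, the subalgebra $\mathcal{H}_h$ preserves $h_0$-eigenvalues, whereas $[h_0,e_n]=2e_n$ shows that each $e_n$ raises the $h_0$-eigenvalue by $2$. Writing $V^\mu:=\bigoplus_\lambda V_{\lambda,\mu}$, we obtain $V=\bigoplus_\mu V^\mu$ with each $V^\mu$ an $\mathcal{H}_h$-submodule, and the $h_0$-spectrum contained in $\mu_0+2\mathbb{Z}_{\geq 0}$, where $\mu_0$ is its minimum value (which exists because $e_n$ only raises $h_0$).

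The first step in the plan is to show that the $h_0$-spectrum is in fact finite. Since $V$ is uniformly bounded as a $\Vir$-module, $C$ must act as zero, and every nonzero $\Vir$-subquotient of $V$ is either of intermediate series (contributing $1$ to $\dim V_\lambda$ for generic $\lambda$) or a trivial one-dimensional module (contributing $1$ to $\dim V_0$). Applying this to each nonzero $V^\mu$ together with the uniform bound on $\dim V_\lambda$ (and separately on $\dim V_0$), only finitely many $\mu$ support a nonzero $V^\mu$, so a maximum $\mu_{\max}$ exists.

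The central step is then short: $V^{\mu_{\max}+2}=0$ immediately forces $e_nV^{\mu_{\max}}=0$ for every $n\in\mathbb{Z}$. Setting
$$W:=\{v\in V\mid e_nv=0\text{ for all }n\in\mathbb{Z}\},$$
a direct commutator computation using $[e_n,d_m]=-ne_{m+n}$, $[e_n,h_m]=-2e_{m+n}$ and $[e_n,e_m]=0$ shows that $W$ is a $\mathcal{T}_2$-submodule of $V$. Since $0\neq V^{\mu_{\max}}\subset W$, the irreducibility of $V$ forces $W=V$, and hence $e_n=0$ on all of $V$.

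Once $e_n=0$ on $V$, each $V^\mu$ is itself a $\mathcal{T}_2$-submodule, so irreducibility forces $V=V^{c}$ for a single $c\in\mathbb{C}$. The action of $\mathcal{T}_2$ then factors through $\mathcal{H}_h$, and $V$ is an irreducible uniformly bounded $\mathcal{H}_h$-module; by Theorem 2.3 (ruling out the highest/lowest weight options which are not uniformly bounded) together with Remark 2.4(1), we conclude $V\cong\mathcal{A}_{a,b,c}$ with $h_nv_i=cv_{n+i}$ and $e_nv_i=0$, as claimed. The step I expect to be the most delicate is the finiteness of the $h_0$-spectrum, in particular handling the possibility of trivial $\Vir$-summands $V^\mu$ supported at $V_0$; but the uniform bound applied at both generic $\lambda$ and at $\lambda=0$ should control these.
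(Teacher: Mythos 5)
Your proof is correct, but it reaches the key intermediate fact $e_nV=0$ by a genuinely different route from the paper. The paper first produces a single vector $v$ with $e_0v=0$ via Lie's theorem applied to the solvable pair $\mathbb{C}\{h_0,e_0\}$ acting on a finite-dimensional weight space, then passes to the subalgebra $\mathcal{H}_e=\mathbb{C}\{d_n,e_n\mid n\in\mathbb{Z}\}\cong\mathcal{D}$, extracts from $U(\mathcal{H}_e)v$ an irreducible intermediate-series $\mathcal{D}$-submodule $V'$ with $e_nu_i=du_{n+i}$ (Lemma \ref{lem25}), kills the constant $d$ by noting that $e_0$ acts nilpotently (in fact as zero, since $e_0$ is central in $\mathcal{H}_e$) on $U(\mathcal{H}_e)v$, and finally spreads $e_nV'=0$ to all of $V=U(H)V'$ by a commutator computation. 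You instead exploit the second grading of $V$ by the $h_0$-eigenvalue: uniform boundedness, applied both at a nonzero weight and at weight $0$ to control possible trivial Virasoro composition factors, shows only finitely many $h_0$-eigenspaces $V^\mu$ are nonzero; the top one is annihilated by every $e_n$, and the common kernel $W$ of all $e_n$ is a $\mathcal{T}_2$-submodule (by the commutators $[e_n,d_m]=-ne_{m+n}$, $[e_n,h_m]=-2e_{m+n}$), so irreducibility gives $W=V$. Your argument avoids Lemma \ref{lem25} and the intermediate appeal to the $\mathcal{D}$-module classification for $\mathcal{H}_e$, at the cost of the finiteness-of-spectrum counting argument (which you correctly identify as the delicate point and handle correctly); the paper's route avoids that counting but leans on Lemma \ref{lem25}. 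The two proofs coincide at the beginning ($C=0$ from uniform boundedness) and at the end: once $e_n=0$ on $V$, irreducibility over $\mathcal{T}_2$ is equivalent to irreducibility over $\mathcal{H}_h\cong\mathcal{D}$, and Theorem \ref{thv} with the subsequent remark identifies $V$ as $\mathcal{A}_{a,b,c}$.
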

\begin{proof}
From representation theory of $\Vir$ (\cite{KS}), we have $C=0$.

Clearly, $\c\{h_0, e_0\}$ is a $2$-dimensional solvable Lie
subalgebra of ${\mathcal T}_2$. So we can choose an irreducible $\c\{h_0,
e_0\}$-submodule $\c\{v\}$ of $V$  such that $h_0v=c_1v$ and
$e_0v=0$ for some $c_1\in\c$.

Now the Lie subalgebra ${\mathcal H}_e:=\c\{d_n, e_n\mid
n\in\z\}$ is isomorphic to the Lie algebra $\mathcal D$ defined in Section 2 (see Definition 2.1).
Set $U=U({\mathcal H}_e)v$, which is ${\mathcal H}_e$-module generated by $v$.
By Lemma \ref{lem25}, we can choose an irreducible ${\mathcal H}_e$-submodule $V'=\sum u_i$ of $U$ with $e_n u_i=du_{i+n}$ for some $d\in\c$ and for all $n, i\in\z$.

Moreover, $V=\sum U(H)u_i$, where $H=\c\{h_i\mid i\in\z\}$. Clearly $e_0$ is nilpotent on some element $u_i\in V'$, so $d=0$.
It is $e_nV'=0$, and then $e_nV=0$ since
$e_nh_iu_j=h_ie_nu_j-2e_{i+j}u_i=0$ for all $n\in\z$. Now the irreducibility of $V$
as ${\mathcal T}_2$-module is equivalent to that of $V$ as ${\mathcal
H}_h$-module, where ${\mathcal H}_h=\c\{d_n, h_n\mid n\in\z\}$ is also isomorphic to the Lie algebra $\mathcal D$. By
Theorem \ref{thv}, $V=\sum v_i$ is the Harish-Chandra module of intermediate series
with $h_nv_i=cv_{n+i}$ for some $c\in\c$ and for all $n, i\in\z$.
\end{proof}

\section{Harish-Chandra modules over the affine-Virasoro algebra $\mathcal L$}

\begin{theo} \label{pH1}
Let $V$ be an irreducible weight ${\mathcal L}$-module with finite-dimensional weight spaces. If $V$ is not a highest and lowest
module, then $V$ is uniformly bounded.
\end{theo}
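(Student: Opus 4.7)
Assume $V$ is neither a highest nor a lowest weight $\mathcal L$-module; we want to prove $\sup_{\lambda,\mu}\dim V_{\lambda,\mu}<\infty$. Schur's lemma gives $C=c\cdot\mathrm{Id}$, and irreducibility together with the weight decomposition puts the joint weights of $V$ in $(\lambda_0+\z)\times(\mu_0+2\z)$ for some $\lambda_0,\mu_0\in\c$. The hypothesis that no weight vector is annihilated by $\mathcal L^+$ or $\mathcal L^-$ forces $\mathrm{Supp}(V)=\lambda_0+\z$, i.e.\ the $d_0$-support is unbounded above and below.

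\emph{Central charge vanishes.} Restrict $V$ to the Heisenberg--Virasoro subalgebra $\mathcal H_h\cong\mathcal D$. Each irreducible $\mathcal H_h$-subquotient is, by Theorem~\ref{thv}, highest, lowest, or uniformly bounded; by the Kaplansky--Santharoubane classification \cite{KS} (inherited via the Vir subalgebra), $C$ acts trivially on any uniformly bounded irreducible $\mathcal H_h$-module. Hence if $c\ne 0$, every irreducible $\mathcal H_h$-subquotient must be highest or lowest. Using the corresponding filtration together with the brackets $[d_n,e_m]=me_{n+m}$, $[h_n,e_m]=2e_{n+m}$ and $[e_n,f_m]=h_{n+m}+n\delta_{n+m,0}C$, one inductively adjusts a lifted vector by elements of $U(\mathcal L^+)$ so that it is in turn annihilated by $d_n,h_n,e_n$ ($n>0$) and $e_0$, producing an $\mathcal L$-highest weight vector. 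This contradicts the hypothesis, so $c=0$.

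\emph{Bounding multiplicities.} With $c=0$, the plan is to adapt Mathieu's averaging argument. Fix a joint weight $(\lambda_0+n,\mu)$ and linearly independent vectors $v_1,\dots,v_r\in V_{\lambda_0+n,\mu}$. The degree-preserving operators $d_kd_{-k}$, $h_kh_{-k}$, $e_kf_{-k}$ and $f_ke_{-k}$ all map $V_{\lambda_0+n,\mu}$ into itself, and their matrices in the basis $\{v_j\}$ depend polynomially on $k$ with degree controlled by the commutation relations of $\mathcal L$. The non-highest/non-lowest hypothesis guarantees that none of these families is identically zero. A Vandermonde-type dimension count then yields a uniform bound $r\le M$ with $M$ independent of $(n,\mu)$.

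\emph{Main obstacle.} The delicate point is controlling the $h_0$-direction. Mathieu's original argument bounds a single integer-indexed weight multiplicity for $\mathrm{Vir}$, but here the $d_0$-weight space $\bigoplus_\mu V_{\lambda_0+n,\mu}$ could a priori be infinite-dimensional, because $e_0^k v$ shifts $\mu$ to $\mu+2k$ and need not terminate. The technical core is therefore to exploit the mixed $\mathrm{Vir}$--$\widehat{\mathfrak{sl}}_2$ commutators so as to simultaneously bound multiplicities in $n$ and in $\mu$; the identities relating $e_kf_{-k}+f_ke_{-k}$ to $h_0$, $h_kh_{-k}$ and $d_kd_{-k}$ are expected to close the Vandermonde argument, and this interaction of the Virasoro and affine parts is where the essential new content of the proof lies.
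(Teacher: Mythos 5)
There is a genuine gap: your proposal is a plan rather than a proof, and you concede as much. The step that actually constitutes the theorem --- producing a uniform bound on $\dim V_{\lambda_0+n}$ --- is only described (``a Vandermonde-type dimension count then yields a uniform bound $r\le M$'') and never carried out, and your final paragraph explicitly acknowledges that the decisive difficulty (the a priori unbounded $h_0$-direction, i.e.\ that $\bigoplus_\mu V_{\lambda_0+n,\mu}$ could be infinite-dimensional) is unresolved: you write that the mixed commutator identities ``are expected to close the Vandermonde argument'' and that this ``is where the essential new content of the proof lies.'' An expectation is not an argument. The preliminary reduction $C=0$ is likewise only sketched (passing from a highest weight vector of an $\mathcal H_h$-subquotient to an $\mathcal L$-highest weight vector by ``inductively adjusting'' is asserted without justification), and in any case the paper does not need $C=0$ for this theorem at all; that reduction belongs to Propositions \ref{p31} and \ref{p51}.

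For comparison, the paper's proof avoids Mathieu-style averaging entirely. It shows that for every $i\in\z^*$ and $k\in\z$ the joint action map
$$d_i\oplus d_{i+1}\oplus e_i\oplus f_i\oplus h_i:\ V_k\ \longrightarrow\ V_{k+i}\oplus V_{k+i+1}$$
is injective, which with $i=-k$ gives $\dim V_k\le \dim V_0+\dim V_1$ at once. The injectivity is proved by contradiction: a vector $v_0$ killed by $d_i,d_{i+1},e_i,f_i,h_i$ is killed by the Lie subalgebra these generate, hence by all of ${\mathcal L}_{[N,\infty)}$ for some $N$ (since every large $\ell$ is $n_1i+n_2(i+1)$); a PBW argument then shows $V_+$ is generated over ${\mathcal L}_+$ by $V_{[0,N)}$, that ${\mathcal L}_+$ eventually annihilates all sufficiently high degrees, and finally that $V$ has a top degree $p$, in which a common eigenvector of $\mathfrak h$ annihilated by $e_0$ is a highest weight vector --- contradicting the hypothesis. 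If you want to salvage your route you would have to actually exhibit the polynomial identities and the Vandermonde estimate, including a bound in the $\mu$-direction; as written, the core of the theorem is missing.
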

\begin{proof} From Section 2, we can suppose that $V=\oplus_{i\in \z}V_i$ is an
irreducible Harish-Chandra ${\mathcal L}$-module without highest and
lowest weights. We shall prove that for any $i\in\Z^*$, $k\in\Z$,
\begin{eqnarray}\label{s===0}
d_i|_{V_k}\oplus d_{i+1}|_{V_k}\oplus e_{i}|_{V_k}\oplus
f_{i}|_{V_k}\oplus h_i|_{V_k}: \ \ V_k\ \to\ V_{k+i}\oplus V_{k+i+1}
\end{eqnarray}
is injective. In particular, by taking $i=-k$, we obtain that
$\dim\,V_k$ is uniformly bounded.

In fact, suppose there exists some $v_0\in V_k$ such that
\begin{equation}\label{LLL-1111}d_iv_0=d_{i+1}v_0=e_{i}v_0=f_{i}v_0=h_{i}v_0=0.\end{equation}
Without loss of generality, we can suppose $i>0$. Note that when
$\ell\gg0$, we have
$$\ell=n_1i+n_2(i+1)$$
for some $n_1, n_2\in\N$, from this and the relations in the
definition, one can easily deduce that $d_\ell,e_{\ell}, f_{\ell},
h_{\ell}$ can be generated by $d_i,d_{i+1},e_{i}, f_{i}, h_{i}$.
Therefore there exists some $N>0$ such that
$$d_\ell v_0=e_{\ell}v_0=f_{\ell}v_0=h_{\ell}v_0=0\mbox{\ \ for all \ }\ell\ge
N.$$

This means \begin{equation}{\mathcal L}_{[N, +\infty)}v_0=0,\label{v00}\end{equation} where ${\mathcal L}_{[N, +\infty)}=\oplus_{i\ge N} {\mathcal L}_i$.

Since ${\mathcal L}={\mathcal L}_{[1, N)}+{\mathcal L}_0+{\mathcal L}_-+{\mathcal L}_{[N, \infty)}$, using the PBW theorem and the irreducibility of $V$, we have
\begin{eqnarray}
V&=&U({\mathcal L})v_0=U({\mathcal L}_{[1, N)})U({\mathcal L}_0+{\mathcal L}_-)U({\mathcal L}_{[N, \infty)})v_0\\
&=&U({\mathcal L}_{[1, N)})U({\mathcal L}_0+{\mathcal L}_-)v_0.\label{4.4}
\end{eqnarray}

Note that $V_+$ is a ${\mathcal L}_+$-module. Let $V_+'$  be the ${\mathcal L}_+$-submodule of $V_+$ generated by $V_{[0, N)}$.

Now prove that \begin{equation}V_+=V_+'. \label{vvv}\end{equation} In fact, let $x\in V_+$ be of degree $k$. If $0\le k<N$, then by definition $x\in V_+'$. Suppose that $k\ge N$. By (\ref{4.4}), $x$ is a linear combination of the form $u_ix_i$ with $u_i\in {\mathcal L}_{[1, N)}$ and $x_i\in V$, where $i$ is in a finite subset of $\mathbb {Z}_+$.
For any $i\in I$, the degree $\deg u_i$ of $u_i$ satisfies $1\le \deg u_i<N$, so $0<\deg x_i=k-\deg u_i<k$. By inductive hypothesis, $x_i\in V_+'$, and thus $x\in V_+'$. So (\ref{vvv}) holds.

Eq. (\ref{vvv}) means that $V_+$ is finite generated as ${\mathcal L}$-module. Choose a basis $B$ of $V_{[0, N)}$, then for any $x\in B$, we have $x=u_xv_0$ for some $u_x\in U(\mathcal L)$. Regarding $u_x$ as a polynomial with respect to a basis of $\mathcal L$, by induction on the polynomial degree and using $[u, w_1w_2]=[u, w_1]w_2+w_1[u, w_2]$ for $u\in \mathcal L$, $w_1, w_2\in U(\mathcal L)$, we see that there exists a positive integer $k_x$ large enough such that $k_x>N$ and $[{\mathcal L}_{[k_x, +\infty)}, u_x]\subset U(\mathcal L){\mathcal L}_{[N, \infty)}$.

Then by (\ref{v00}), ${\mathcal L}_{[k_x, +\infty)}x=[{\mathcal L}_{[k_x, +\infty)}, u_x]v_0+u_x{\mathcal L}_{[k_x, +\infty)}v_0=0$. Take $k=\max\{k_x, x\in B\}$, then
$${\mathcal L}_{[k, +\infty)}V_+={\mathcal L}_{[k, +\infty)}U({\mathcal L}_+)V_{[0, N)}=U({\mathcal L}_+){\mathcal L}_{[k, +\infty)}V_{[0, N)}=0.$$
Since ${\mathcal L}_+\subset {\mathcal L}_{[k, +\infty)}+[{\mathcal L}_{[-k', 0)}, {\mathcal L}_{[k, +\infty)}]$ for some $k'>k$, we get ${\mathcal L}_+V_{[k', +\infty)}=0$. Now if $x\in V_{[k'+N, +\infty)}$, by (\ref{4.4}), it is a sum of elements of the form $u_jx_j$ such that $u_j\in {\mathcal L}_{[1, +\infty)}$ and then $x_j\in V_{[k', +\infty)}$, and thus $u_jx_j=0$. This prove that $V$ has no degree $\ge k'+N$.

Now let $p$ be the maximal integer such that $V_p\ne 0$, since the four-dimensional subalgebra $\c\{d_0, h_0, e_0, C\}$ has a two-dimensional solvable subalgebra $\c\{h_0, e_0\}$ and two central elements $\{d_0, C\}$, so there exists a common eigenvector $w$ of $\frak h=\c\{d_0, h_0, C\}$ in $V_p$ with $e_0w=0$. It is ${\mathcal L}^+w=0$.
Then $w$ is a highest weight vector of $\mathcal L$, this contradicts the assumption of the Theorem.
\end{proof}

\section{Representations of the Lie algebra $\mathcal L$}

Now we shall consider uniformly bounded irreducible
 weight modules over ${\mathcal L}$.

Let $M(\ll)$ be the finite-dimensional irreducible highest weight
$\frak{sl}_2$-module with highest weight $\ll$, then
$L(M(\ll)):=M(\ll)\ot \c[t, t^{-1}]$ becomes an irreducible
${\mathcal L}$-module by the actions as follows:
\begin{eqnarray*}
&&d_m(u\ot t^i)=(a+bm+i)u\ot t^{m+i},\\
&&x_m(u\ot t^i)=(x\cdot u)\ot t^{m+i},
\end{eqnarray*} for any $u\in M(\ll)$ and for some $a, b\in\c$.

\noindent{\bf Remark.} $L(M(\ll))$ irreducible iff $M(\ll)$ is a nontrivial $\frak{sl}_2$-module or $a\not\in\z$ or $b\ne 0, 1$. We also use $L(M(\ll))'$ to denote the irreducible submodule or subquotient of $L(M(\ll))$.

\begin{prop} \label{p51}
Let $V$ be a  uniformly bounded irreducible
 weight ${\mathcal L}$-module. Then $V$ is isomorphic to $L(M(\ll))'$ for some finite-dimensional irreducible ${\mathcal L}$-module $M(\ll)$.
\end{prop}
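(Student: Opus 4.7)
The plan is to produce an explicit surjection $\phi\colon L(M(c))\to V$ for a suitable $c\in\z_{\ge 0}$ and then conclude by irreducibility. First I would argue $C$ acts as zero on $V$, exactly as in the proof of Proposition \ref{p31}: a uniformly bounded weight $\Vir$-module has $C=0$ by Kaplansky--Santharoubane. To locate a ``top'' $e$-vacuum vector, let $c$ be the supremum of the $h_0$-eigenvalues occurring on $V$; this supremum is a nonnegative integer because each $d_0$-weight space is a finite-dimensional $\frak{sl}_2$-module under $\{h_0,e_0,f_0\}$. Any weight vector $v$ with $h_0v=cv$ then satisfies $e_0v=0$, since $e_0v$ would lie in the (empty) weight space of $h_0$-weight $c+2$. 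Running the argument of Proposition \ref{p31} inside $U(\mathcal H_e)v$---apply Lemma \ref{lem25} to obtain an irreducible $\mathcal H_e$-subquotient of type $\mathcal A_{a,b,d}'$, then use local nilpotence of $e_0$ to force $d=0$---yields a family $\{v_i\}_{i\in\z}$ with $d_nv_i=(a+bn+i)v_{n+i}$ and $e_nv_i=0$ for all $n,i\in\z$.

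Next I would promote this family to an irreducible $\mathcal T_2$-submodule. The $\mathcal T_2$-submodule generated by the $v_i$'s is uniformly bounded, and a refinement of a $\Vir$-composition series (as in the proof of Lemma \ref{lem25}) produces an irreducible $\mathcal T_2$-submodule $V_1\subseteq V$; Proposition \ref{p31} identifies $V_1=\sum\c v_i\cong\mathcal A_{a,b,c}'$ with $h_nv_i=cv_{n+i}$ and $e_nv_i=0$.

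The heart of the proof, and the principal obstacle, is the key identity
\begin{equation*}
f_nv_i=f_0v_{n+i}\quad\text{for all }n,i\in\z.
\end{equation*}
Set $w:=f_nv_i-f_0v_{n+i}$. A short calculation with $[e_m,f_n]=h_{m+n}$ on our ($C=0$) module and $h_nv_j=cv_{n+j}$ shows $e_mw=0$ for every $m\in\z$; thus $w$ lies in the $e$-vacuum subspace $U:=\{u\in V\mid e_mu=0\ \forall m\}$ and has $h_0$-weight $c-2$. I would then establish the weight bound $U(\mathcal L)w\subseteq\bigoplus_{k\le c-2}V^{[k]}$, where $V^{[k]}$ denotes the $h_0$-weight-$k$ subspace. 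Granting this, $v_0\in V^{[c]}$ is not in $U(\mathcal L)w$, so the $\mathcal L$-submodule $U(\mathcal L)w$ is proper, and irreducibility of $V$ forces $w=0$. To establish the bound I would use the triangular decomposition $\mathcal L=\mathcal L^+\oplus\mathcal L^0\oplus\mathcal L^-$ with $\mathcal L^+=\sum\c e_n$, $\mathcal L^-=\sum\c f_n$, and $\mathcal L^0=\sum(\c d_n+\c h_n)+\c C$, note that $\mathcal L^0$ preserves $U$ (since $[e_m,h_n]$ and $[e_m,d_n]$ are multiples of $e$'s), and then argue by induction on the number of $f$'s---commuting each $e$ past the $f$-string via $[e_m,f_n]=h_{m+n}$---that any PBW monomial acting on $w$ with a strict excess of $e$'s over $f$'s vanishes.

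With the key identity in hand, I would define $\phi\colon L(M(c))\to V$ by $\phi(f^ku_0\otimes t^i)=f_0^kv_i$, adopting the standard normalization $ef^ku_0=k(c-k+1)f^{k-1}u_0$ in $M(c)$. Equivariance is a direct check: for $d_m$ one uses $[d_m,f_0]=0$ and the formula on $V_1$; for $h_m$ and $e_m$ one invokes the $\frak{sl}_2$-triple relations in $M(c)$ together with $[e_m,f_0^k]$-type identities; for $f_m$ one uses the key identity together with $[f_m,f_n]=0$. Since $\phi(u_0\otimes1)=v_0\ne0$ and $V$ is irreducible, $\phi$ is surjective, yielding $V\cong L(M(c))/\ker\phi=L(M(c))'$.
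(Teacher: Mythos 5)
Your proof is correct, but it takes a genuinely different route from the paper's. The paper, after extracting (as you do) an irreducible $\mathcal T_2$-submodule $V'=\sum\mathbb{C}v_i$ with $h_nv_i=cv_{n+i}$ and $e_nv_i=0$, splits into the cases $c=0$ (where it reduces to $\mathcal T_2'$ and Proposition \ref{p31}) and $c\neq0$ (where it uses $v_i=\frac1c h_{i-k}v_k$ to get $V=U(H,F)v_k$, deduces that each $e_i$ and $f_i$ is locally nilpotent, hence that $V$ is an integrable module over the loop algebra $\mathbb{C}\{e_i,f_i,h_i,d_0\}$, and then invokes Chari's classification of irreducible integrable loop-algebra modules to identify $V$ as an evaluation module $L(M(\lambda))$ at a single point). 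Your argument replaces the appeal to Chari's theorem by the explicit relation $f_nv_i=f_0v_{n+i}$, proved by showing that $w=f_nv_i-f_0v_{n+i}$ is killed by all $e_m$ (a two-line computation using $[e_m,f_n]=h_{m+n}$, $C=0$ and $h_jv_i=cv_{i+j}$) and that $U(\mathcal L)w=U(\mathcal L^-)U(\mathcal L^0)w$ lies in $h_0$-weight $\le c-2$, hence is a proper submodule unless $w=0$; the explicit epimorphism $\phi\colon L(M(c))\to V$ then finishes the proof. What your approach buys is self-containedness: it avoids the citations to \cite{C,CP,E0} entirely, and it also bypasses the paper's somewhat delicate claim that $V$ remains irreducible over the smaller loop algebra. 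What the paper's approach buys is brevity and a template that generalizes to other types, where an explicit surjection would be harder to write down. Two small points you should make explicit: (i) well-definedness of $\phi$ at the top layer $k=c$ requires $f_0^{c+1}v_j=0$, which follows from the same weight bound (the $h_0$-weights of $V$ lie in $[-c,c]$ because each finite-dimensional $d_0$-weight space is an $\frak{sl}_2$-module with symmetric weights); and (ii) in the degenerate case where the underlying $\Vir$-module is a proper subquotient (e.g.\ $c=0$ with $a\in\mathbb{Z}$, $b\in\{0,1\}$), the target must be taken to be $L(M(c))'$ rather than $L(M(c))$, exactly as the paper's prime notation indicates.
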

\begin{proof}
Similarly to Proposition \ref{p31}, one has $C=0$. Clearly, ${\mathcal T}_2=\c\{d_n, h_n, e_n, C\mid n\in\z\}$
is a subalgebra of ${\mathcal L}$.

Consider $V$ as a ${\mathcal
T}_2$-module, similarly to Lemma \ref{lem25}, $V$ has an irreducible uniformly bounded submodule $V'$. By Proposition \ref{p31},
$V'=\sum\c v_i$ of $V$ with $h_nv_i=cv_{n+i}$ and
$e_{n}v_i=0$ for some $c\in\c$. Moreover, $V=U(F)V'$, where $F=\c\{f_i\mid i\in\z\}$ is the Lie subalgebra of $\mathcal L$ generated by $f_i$ for all $i\in\z$.

If $c=0$, then $e_if_jv_k=[e_i, f_j]v_k+f_je_iv_k=0$ for all $u, j, k\in\z$. So $EV=0$, where $E=\c\{e_i\mid i\in\z\}$ is the Lie subalgebra of $\mathcal L$ generated by $e_i$ for all $i\in\z$.

Then the irreducibility of $V$
as ${\mathcal L}$-module is equivalent to that of $V$ as ${\mathcal
T}_2'$-module, where the subalgebra ${\mathcal
T}_2':=\c\{d_n, h_n, f_n, C\mid n\in\z\}$ is also isomorphic to the Lie algebra ${\mathcal T}_2$. By
Proposition \ref{p31}, $V$ is the Harish-Chandra module of intermediate series
with $f_nv_i=0$ for all $n, i\in\z$. The theorem is proved. So we can suppose that $c\ne 0$.

Fix $k\in\z$, for any $i\in\z$, $v_i={\frac1c}h_{i-k}v_k$, so $U(F)v_i\subset U(H, F)v_k$, where $U(H, F)$ is the universal enveloping algebra of the Lie subalgebra generated by $h_i, f_i$ for all $i\in\z$ of $\mathcal L$.
So $$V=U(H, F)v_k, \eqno(5.1)$$ for any $k\in\z$.
Then for any $v\in V$ there exists $n\in\z_+$ such that $e_i^nv=0$ since $e_iv_k=0$. It is that each $e_i$ is locally nilpotent on $V$.

Replacing ${\mathcal T}_2=\c\{d_n, h_n, e_n, C\mid n\in\z\}$ by the subalgebra ${\mathcal T}_2':=\c\{d_n, h_n, f_n, C\mid n\in\z\}$, we see that each $f_i$ is locally nilpotent on $V$.
So $V$ is an integrable weight ${\mathcal L}$-module.

By (5.1) we know that $V$ becomes an irreducible module over the loop algebra $\bar L({\mathcal L})=\c\{e_i, f_i, h_i, d_0\mid i\in\z\}$. Moreover, $V$ is an integrable weight $\bar L({\mathcal L})$-module. So by \cite{C} (or see \cite{CP}, \cite{E0}), $V\cong M(\bar\ll, \bar a)=L(\otimes_{i=1}^kM(\ll_i))$ as $\bar L({\mathcal L})$-modules, where $\bar \ll=(\ll_1, \cdots, \ll_k)$ and $\bar a=(a_1, \cdots, a_k)$.
However, by $[d_i, h_j]=jh_{i+j}$, we have $k=1$ and $\bar a=1$. So as an irreducible $\bar L({\mathcal L})$-module, $V\cong L(M(\ll))'$ for some highest weight $\ll$ of $\frak{sl}_2$.

Now we only need consider the actions of $d_n$ on $V$. Suppose that $\sum_{i\in\z}\c v\ot t^i$ is a Vir-module of intermediate series, then
$$d_nf_0v\ot t^i=f_0d_nv\ot t^i=(a+bn+i)f_0v\ot t^{n+i}.$$
So $\sum_{i\in\z}\c f_0v\ot t^i$ is a Vir-module of intermediate series.
\end{proof}

Combining with Theorem 4.1, we get the following result.

\begin{theo} \label{main2}
Let $V$ be an irreducible weight ${\mathcal L}$-module with finite-dimensional weight spaces. Then $V$ is a highest weight module or a
lowest weight module or isomorphic to $L(M(\ll))'$ for some finite-dimensional irreducible ${\mathcal L}$-module $M(\ll)$.
\end{theo}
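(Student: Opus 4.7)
The plan is to observe that Theorem \ref{main2} is essentially a direct corollary of the two preceding results, so the proof reduces to a dichotomy argument that glues them together. First I would let $V$ be an irreducible weight $\mathcal{L}$-module with finite-dimensional weight spaces, and split into cases according to whether $V$ admits a highest or lowest weight vector. In the first case, the conclusion holds by definition, so there is nothing to do.

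In the remaining case, $V$ is neither a highest nor a lowest weight module, and I would apply Theorem \ref{pH1} to conclude that $V$ is uniformly bounded. Now $V$ becomes a uniformly bounded irreducible weight $\mathcal{L}$-module, and Proposition \ref{p51} applies directly to give $V\cong L(M(\lambda))'$ for some finite-dimensional irreducible $\frak{sl}_2$-module $M(\lambda)$.

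Since all the real work has been done in Theorem \ref{pH1} (controlling when the injectivity of the map $d_i\oplus d_{i+1}\oplus e_i\oplus f_i\oplus h_i$ forces a highest weight vector, yielding uniform boundedness) and in Proposition \ref{p51} (reducing a uniformly bounded module via the ${\mathcal T}_2$-subalgebra, exploiting local nilpotence of $e_i,f_i$, and then invoking the classification of integrable loop $\frak{sl}_2$-modules), there is no additional obstacle: the argument is a one-line case analysis. The only point worth remarking on is that the two cases (highest/lowest weight versus uniformly bounded) are not mutually exclusive in principle, but this is harmless because in either case the stated conclusion holds, so the disjunction in the theorem is genuinely covered.
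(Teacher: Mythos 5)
Your proposal is correct and matches the paper exactly: the paper offers no separate proof of Theorem \ref{main2}, stating only that it follows by ``combining with Theorem 4.1,'' i.e.\ precisely your dichotomy of highest/lowest weight versus uniformly bounded, with Proposition \ref{p51} handling the latter case.
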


\noindent{\bf Remark.}
The unitary highest weight modules over the affine-Virasoro algebra $\mathcal L$  were considered in \cite{Ka1, JY}.

\newpage \centerline{\bf ACKNOWLEDGMENTS}

\vskip15pt
Project is supported by the NNSF (Grants: 11271131, 11371134) and ZJNSF (LZ14A010001).


\begin{thebibliography}{15}

\bibitem{ADKP} E. Arbarello; C. De Concini; V.G. Kac; C. Procesi,
{\sl Moduli spaces of curves and representation theory}, Comm. Math. Phys., {\bf 117} (1988), 1--36.


\bibitem{B}Y. Billig, A category of modules for the full toroidal Lie algebra, International Mathematics Research Notices, 2006, 46 pp



\bibitem{C} V. Chari, {\sl Integrable representations of affine Lie-algebras}, Invent. Math. {\bf 85} (2) (1986), 317--335.

\bibitem{CP} V. Chari; A. Pressley, {\sl New unitary representations of loop groups}, Math. Ann. {\bf 275} (1986), 87--104.

\bibitem{CL} S. Cheng; N. Lam, {\sl Finite conformal modules over the $N = 2, 3, 4$ superconformal
algebras}. J. Math. Phys. {\bf 42} (2) (2001), 906--933.

\bibitem{E0} S. Eswara Rao, {\sl On the representations of loop algebras}, Comm. Algebra {\bf 21} (1993), 2131--2153.

\bibitem{E1} S. Eswara Rao, {\sl Classification of irreducible integrable modules for multi-loop algebras with finite-dimensional weight spaces}, J. Algebra {\bf 246} (2001),  215--225.

\bibitem{EJ} S. Eswara Rao; C. Jiang, {\sl Classification of irreducible integrable representations for the full toroidal Lie algebras}, J. Pure Appl. Algebra {\bf 200} (1-2) (2005), 71--85.

\bibitem{FK} I.B. Frenkel; V.G. Kac, {\sl Basic representations of affine Lie algebras and dual resonance
models}, Invent. Math. {\bf 62} (1980), 23--66.

\bibitem{GLZ} X. Guo; R. Lv; K. Zhao, {\sl Simple Harish-Chandra modules, intermediate series modules, and Verma modules over the loop-Virasoro algebra}, Forum Math. {\bf 23} (2011), 1029--1052.

\bibitem{JM} C. Jiang; D. Meng, {\sl Integrable representations for generalized Virasoro-toroidal Lie algebras}, J. Algebra {\bf 270} (1) (2003), 307--334.

\bibitem{JY} C. Jiang; H. You, {\sl Irreducible representations for the affine-Virasoro Lie algebra of type $B_l$}, Chinese Ann. Math. Ser. B {\bf 25} (3) (2004), 359--368.

\bibitem{Ka1} V. G. Kac, {\sl Highest weight representations of conformal current algebras}, Symposium on Topological and Geometric Methods in Field Theory, Espoo, Finland, World Scientific, p. 3--16, 1986.

\bibitem{Ka} V. G. Kac, {\sl Infinite-dimensional Lie Algebras}, 3rd ed., Cambridge Univ. Press, Cambridge,
U.K., 1990.

\bibitem{KS} I. Kaplansky; L. J. Santharoubane, {\sl Harish-Chandra modules
over the Virasoro algebra}, Infinite-dimensional groups with
applications (Berkeley, Calif. 1984), 217--231, Math. Sci. Res.
Inst. Publ., 4, Springer, New York, 1985.


\bibitem{Ku} G. Kuroki, {\sl Fock space representations of affine Lie algebras and integral representations in the Wess-Zumino-Witten models}, Comm. Math. Phys. {\bf 142} (3) (1991), 511--542.

\bibitem{LH} D. Liu; N. Hu, {\sl Vertex representations for toroidal Lie algebra of type $G_2$}, J. Pure Appl. Algebra {\bf 198} (1-3) (2005), 257--279.

\bibitem{LJ} D. Liu; C. Jiang, {\sl Harish-Chandra modules over the twisted Heisenberg-Virasoro algebra},
J. Math. Phys., {\bf 49} (1) (2008), 012901, 13 pp.

\bibitem{LQ} X. Liu; M. Qian, {\sl Bosonic Fock representations of the affine-Virasoro algebra}, J. Phys. A, {\bf 27} (5) (1994), 131--136.

\bibitem{LvZ0} R. Lv;  K. Zhao, {\sl Classification of irreducible
weight modules over higher rank Virasoro algebras},  Adv. Math. {\bf 201} (2) (2006), 630--656.

\bibitem{LvZ} R. Lv; K. Zhao, {\sl Classification of irreducible weight modules over the twisted
 Heisenberg-Virasoro algebra}, Comm. Contemp. Math. {\bf 12} (2) (2010), 183--205.

\bibitem{M} O. Mathieu,  {\sl Classification of Harish-Chandra
modules over the Virasoro Lie algebra}, Invent. Math. {\bf
107} (1992), 225--234.

\bibitem{MRY} R. V. Moody; S. Eswara Rao; T. Yomonuma, {\sl Toroidal Lie algebras and vertex representations},
Geom. Ded. {\bf 35} (1990), 287--307.

\bibitem{RM} S. Eswara Rao; R. V. Moody, {\sl Vertex represenations for $n$-affine-Virasoro  Lie
algebras and a generalization of the Virasoro algebra}, Comm. Math.
Phys. {\bf 159} (1994), 239--264.

\bibitem{S} Y. Su, {\sl Classification of quasifinite modules over the Lie algebras of
Weyl type}, Adv. Math. {\bf 174} (2003), 57--68.


\bibitem{S1} Y. Su, {\sl Classification of Harish-Chandra modules over the
higher rank Virasoro algebras}, Comm. Math. Phys. {\bf 240}
(2003), 539--551.

\bibitem{W} M. Wakimoto, Lectures on infinite-dimensional Lie Algebra, World Scientific Publishing Co., Inc. 2001.

\bibitem{XH} L. Xia; N. Hu, {\sl Irreducible representations for Virasoro-toroidal Lie algebras}. J. Pure Appl. Algebra {\bf 194} (1-2) (2004), 213--237.
\end{thebibliography}
\end{document}